\documentclass[12pt, 14paper]{amsart}
\vsize=21.1truecm
\hsize=15.2truecm
\vskip.1in

\usepackage{amsmath,amsfonts,amssymb}

\usepackage{longtable}
\usepackage[mathscr]{eucal}
\usepackage{amsmath, amsthm}
\usepackage{mathrsfs}
\usepackage{amsbsy}
\usepackage{wasysym}
\usepackage{url}

\input xypic
\xyoption{all}






\makeindex
\makeglossary

\begin{document}
\baselineskip = 16pt

\newcommand \ZZ {{\mathbb Z}}
\newcommand \NN {{\mathbb N}}
\newcommand \RR {{\mathbb R}}
\newcommand \PR {{\mathbb P}}
\newcommand \AF {{\mathbb A}}
\newcommand \GG {{\mathbb G}}
\newcommand \QQ {{\mathbb Q}}
\newcommand \CC {{\mathbb C}}
\newcommand \bcA {{\mathscr A}}
\newcommand \bcC {{\mathscr C}}
\newcommand \bcD {{\mathscr D}}
\newcommand \bcF {{\mathscr F}}
\newcommand \bcG {{\mathscr G}}
\newcommand \bcH {{\mathscr H}}
\newcommand \bcK{{\mathscr K}}
\newcommand \bcM {{\mathscr M}}
\newcommand \bcJ {{\mathscr J}}
\newcommand \bcL {{\mathscr L}}
\newcommand \bcO {{\mathscr O}}
\newcommand \bcP {{\mathscr P}}
\newcommand \bcQ {{\mathscr Q}}
\newcommand \bcR {{\mathscr R}}
\newcommand \bcS {{\mathscr S}}
\newcommand \bcV {{\mathscr V}}
\newcommand \bcW {{\mathscr W}}
\newcommand \bcX {{\mathscr X}}
\newcommand \bcY {{\mathscr Y}}
\newcommand \bcZ {{\mathscr Z}}
\newcommand \goa {{\mathfrak a}}
\newcommand \gob {{\mathfrak b}}
\newcommand \goc {{\mathfrak c}}
\newcommand \gom {{\mathfrak m}}
\newcommand \gon {{\mathfrak n}}
\newcommand \gop {{\mathfrak p}}
\newcommand \goq {{\mathfrak q}}
\newcommand \goQ {{\mathfrak Q}}
\newcommand \goP {{\mathfrak P}}
\newcommand \goM {{\mathfrak M}}
\newcommand \goN {{\mathfrak N}}
\newcommand \uno {{\mathbbm 1}}
\newcommand \Le {{\mathbbm L}}
\newcommand \Spec {{\rm {Spec}}}
\newcommand \Gr {{\rm {Gr}}}
\newcommand \Pic {{\rm {Pic}}}
\newcommand \Jac {{{J}}}
\newcommand \Alb {{\rm {Alb}}}
\newcommand\alb{{\rm{alb}}}
\newcommand \Corr {{Corr}}
\newcommand \Chow {{\mathscr C}}
\newcommand \Sym {{\rm {Sym}}}
\newcommand \Prym {{\rm {Prym}}}
\newcommand \cha {{\rm {char}}}
\newcommand \eff {{\rm {eff}}}
\newcommand \tr {{\rm {tr}}}
\newcommand \Tr {{\rm {Tr}}}
\newcommand \pr {{\rm {pr}}}
\newcommand \ev {{\it {ev}}}
\newcommand \cl {{\rm {cl}}}
\newcommand \interior {{\rm {Int}}}
\newcommand \sep {{\rm {sep}}}
\newcommand \td {{\rm {tdeg}}}
\newcommand \alg {{\rm {alg}}}
\newcommand \im {{\rm im}}
\newcommand \gr {{\rm {gr}}}
\newcommand \op {{\rm op}}
\newcommand \Hom {{\rm Hom}}
\newcommand \Hilb {{\rm Hilb}}
\newcommand \Sch {{\mathscr S\! }{\it ch}}
\newcommand \cHilb {{\mathscr H\! }{\it ilb}}
\newcommand \cHom {{\mathscr H\! }{\it om}}
\newcommand \colim {{{\rm colim}\, }} 
\newcommand \End {{\rm {End}}}
\newcommand \coker {{\rm {coker}}}
\newcommand \id {{\rm {id}}}
\newcommand \van {{\rm {van}}}
\newcommand \spc {{\rm {sp}}}
\newcommand \Ob {{\rm Ob}}
\newcommand \Aut {{\rm Aut}}
\newcommand \cor {{\rm {cor}}}
\newcommand \Cor {{\it {Corr}}}
\newcommand \res {{\rm {res}}}
\newcommand \red {{\rm{red}}}
\newcommand \Gal {{\rm {Gal}}}
\newcommand \PGL {{\rm {PGL}}}
\newcommand \Bl {{\rm {Bl}}}
\newcommand \Sing {{\rm {Sing}}}
\newcommand \spn {{\rm {span}}}
\newcommand \Nm {{\rm {Nm}}}
\newcommand \inv {{\rm {inv}}}
\newcommand \codim {{\rm {codim}}}
\newcommand \Div{{\rm{Div}}}
\newcommand \CH{{\rm{CH}}}
\newcommand \sg {{\Sigma }}
\newcommand \DM {{\sf DM}}
\newcommand \Gm {{{\mathbb G}_{\rm m}}}
\newcommand \tame {\rm {tame }}
\newcommand \znak {{\natural }}
\newcommand \lra {\longrightarrow}
\newcommand \hra {\hookrightarrow}
\newcommand \rra {\rightrightarrows}
\newcommand \ord {{\rm {ord}}}
\newcommand \Rat {{\mathscr Rat}}
\newcommand \rd {{\rm {red}}}
\newcommand \bSpec {{\bf {Spec}}}
\newcommand \Proj {{\rm {Proj}}}
\newcommand \pdiv {{\rm {div}}}
\newcommand \wt {\widetilde }
\newcommand \ac {\acute }
\newcommand \ch {\check }
\newcommand \ol {\overline }
\newcommand \Th {\Theta}
\newcommand \cAb {{\mathscr A\! }{\it b}}

\newenvironment{pf}{\par\noindent{\em Proof}.}{\hfill\framebox(6,6)
\par\medskip}

\newtheorem{theorem}[subsection]{Theorem}
\newtheorem{conjecture}[subsection]{Conjecture}
\newtheorem{proposition}[subsection]{Proposition}
\newtheorem{lemma}[subsection]{Lemma}
\newtheorem{remark}[subsection]{Remark}
\newtheorem{remarks}[subsection]{Remarks}
\newtheorem{definition}[subsection]{Definition}
\newtheorem{corollary}[subsection]{Corollary}
\newtheorem{example}[subsection]{Example}
\newtheorem{examples}[subsection]{examples}

\title{On finite dimensionality of Chow groups}
\author{Kalyan Banerjee}

\email{banerjeekalyan@hri.res.in}
\begin{abstract}
In this exposition we understand when the natural map from the two-fold self product of the Chow variety parametrizing codimension $p$ cycles on a smooth projective variety $X$ to the Chow group $\CH^p(X)_0$ of degree zero cycles is surjective. We derive some consequences when the map is surjective.
\end{abstract}
\maketitle

\section{Introduction}
The representability question in the theory of Chow groups is an important question. Precisely it means the following: let $X$ be a smooth projective variety over the field of complex numbers and let $\CH^p(X)$ denote the Chow group of codimension $p$ algebraic cycles on $X$ modulo rational equivalence. Let $\CH^p(X)_{\alg}$ denote the subgroup of $\CH^p(X)$ consisting of algebraically trivial cycles. We say that the group $\CH^p(X)_{\alg}$ is representable if there exists a smooth projective curve $C$ and a correspondence $\Gamma$ on $C\times X$ such that $\Gamma_*$ from $J(C)\cong \CH^1(C)_{\alg}$ to $\CH^p(X)_{\alg}$ is surjective.  The most interesting and intriguing is the case of the zero cycles on $X$. The first breakthrough result in this direction is  by Mumford, \cite{M}: the group $\CH^2(S)_{\alg}$ of a smooth projective complex algebraic surface with geometric genus greater than zero is not representable. It was further generalised by Roitman \cite{R1} for higher dimensional varieties proving that if the variety $X$ has a global holomorphic $i$-form on it, then the group $\CH^n(X)_{\alg}$ is not representable. Here $n$ is the dimension of $X$ and we have $0<i\leq n$. Then there is the famous converse question due to Spencer Bloch saying that: for a smooth projective complex algebraic surface $S$ with geometric genus equal to zero, the group $\CH^2(S)_{\alg}$ is representable. This question has been answered for the surfaces not of general type with geometric genus equal to zero by Bloch-Kas-Liebarman, in \cite{BKL}. The conjecture is still open in general for surfaces of general type but it has been solved in many examples in \cite{B}, \cite{IM}, \cite{GP}, \cite{PW}, \cite{V}, \cite{VC}.

In  the case of highest codimensional cycles on a smooth projective variety $X$, the notion of representability can also be defined in another way. We consider the natural map from the symmetric power $\Sym^d X$ to $\CH^n(X)_{\alg}$, for $d$ positive and $n$ being the dimension of $X$. Suppose that this map is surjective for some $d$ then we say that the group $\CH^n(X)_{\alg}$ is representable. It can be proved as in [\cite{Vo}, Chapter 10, Section 1], that this notion of representability is equivalent to the first notion of representability introduced before.

So following the approach of Voisin as in \cite{Vo}, it is natural to ask whether there is a second notion of representability for lower codimensional cycles. To do this, first we assume that there exist a smooth, projective variety $Y$ and a correspondence $\Gamma$ of codimension $p$ on $Y\times X$, such that $\Gamma_*$ is surjective from ${\CH_0(Y)}_{0}\cong {\CH_0(Y)}_{\alg}$ to $\CH^p(X)_0$, (here ${\CH_0(Y)}_0,\CH^p(X)_0$ denote the group of degree zero cycles on $Y,X$ respectively modulo rational equivalence). Under this assumption the group $\CH^p(X)_0$ is contained in $\CH^p(X)_{\alg}$. On the other hand $\CH^p(X)_{\alg}$ is always contained in $\CH^p(X)_0$ (this fact is due to the existence of Chow varieties, for reference please see [\cite{Fulton}, Example 10.3.3]) Now the second notion of representability is as follows: Let us consider the two-fold product of the Chow variety $C^p_{d}(X)\times C^p_{d}(X)$ where $C^p_d(X)$ is the projective variety parametrizing codimension $p$ and degree $d$ cycles on $X$. Note that the degree of a cycle is determined by fixing an embedding of $X$ into a projective space. Consider the natural map from this product to the group $\CH^p(X)_{0}$ and hence (under the assumption) to $\CH^p(X)_{\alg}$ and we ask the question: does the surjectivity of this map implies the representability in the first sense of $\CH^p(X)_{\alg}$. First we prove the following in this direction:

\smallskip

\begin{theorem}Suppose that the Chow group of codimension $p$ cycles is generated by linear subspaces, that is the natural (Abel-Jacobi) map from $\CH_0(F(X))$ to $\CH^p(X)$ is surjective. This condition is enough to ensure the above mentioned condition on the subgroup $\CH^p(X)_0$ of $\CH^p(X)$. Here $F(X)$ is the Fano variety of linear subspaces of codimension $p$. Suppose that $\CH^p(X)_{\alg}$ is representable in the sense that the map from the two-fold product of the Chow variety to $\CH^p(X)_{\alg}$ is surjective. Then there exists a smooth projective curve $C$ and a correspondence $\Gamma$ on $C\times X$, such that
$\Gamma_*:\CH^1(C)_{\alg}\to \CH^p(X)_{\alg}$
is surjective.
\end{theorem}

\smallskip

As an application we show that the natural map from $C^3_{d}(X)\times C^3_d(X)$ to $\CH^3(X)_{\alg}$ is not surjective for any $d$, where $X$ is a cubic fourfold embedded in $\PR^5$.

Our argument in this direction is a minor modification of the argument present in the approach of Roitman in \cite{R},  Voisin in [\cite{Vo}, Chapter 10, Section 1], where the authors deal with the case of zero cycles. First we recall various notions of representability in the second sense, denoted as "finite dimensionality" of Chow groups of codimension $p$ cycles and show their equivalence. The key point is to use the Roitman's result on the map from the two-fold product of the Chow variety to $\CH^p(X)_{\alg}$ saying that the fibers of this map is a countable union of Zariski closed subsets in the product of Chow varieties. Then after having these equivalent notions of "finite dimensionality" in hand we proceed to the main theorem.

\smallskip

{\small \textbf{Acknowledgements:} The author would like to thank the hospitality of Tata Institute Mumbai and Harish Chandra Reserach Institute, India, for hosting this project. The author also thanks  the anonymous referee for careful reading of the manuscript and for advising the author about the improvements of certain technical points in the paper.}

{\small Throughout the text we work over the field of complex numbers.}

\section{Finite dimensionality of the Chow groups}
Let $X$ be a smooth projective variety defined over the ground field $k$. Let $C^p_d(X)$ denote the Chow variety of $X$ parametrizing codimension $p$ effective cycles on $X$ of a certain degree $d$. To consider the degree we fix an embedding of $X$ into some projective space $\PR^N$. Consider the $\CC$-points of the variety $C^p_{d}(X)$ and denote the set of $\CC$-points of $C^p_d(X)$, simply as $C^p_d(X)$. Then consider the map
$$\theta_p^d:C^p_d(X)\times C^p_d(X)\to \CH^p(X)_{0}$$
given by
$$(Z_1,Z_2)\mapsto [Z_1-Z_2]$$
where $[Z_1-Z_2]$ is the class of the cycle $Z_1-Z_2$ in the Chow group. By abusing notation we will denote the class $[Z_1-Z_2]$ as $Z_1-Z_2$. We make the following assumption:

(*) There exist a smooth, projective variety $Y$ and a correspondence $\Gamma$ of codimension $p$ on $Y\times X$, such that $\Gamma_*$ is surjective from ${\CH_0(Y)}_{0}\cong {\CH_0(Y)}_{\alg}$ to $\CH^p(X)_0$.

Under this assumption the map $\theta^p_d$ takes value in the group $\CH^p(X)_{\alg}$, as $\CH^p(X)_0$ is contained in $\CH^p(X)_{\alg}$ and $\CH^p(X)_{\alg}$ is always contained in $\CH^p(X)_0$ by the theory of Chow varieties, for reference please see [\cite{Fulton}, Example 10.3.3].

\begin{definition}
\label{definition1}
We say that the group $\CH^p(X)_{\alg}$ is representable if there exists $d$ such that $\theta^p_d$ is surjective.
\end{definition}
Now the natural question is that, what are the fibers of $\theta^p_d$, for a fixed $p,d$. Here is a theorem about that
\begin{theorem}
\label{theorem1}
Each fiber of  the map $\theta^p_d$ is a countable union of Zariski closed subsets of $C^p_d(X)\times C^p_d(X)$.
\end{theorem}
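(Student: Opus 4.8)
The plan is to reduce the statement to the classical fact that rational equivalence of cycles is controlled by a countable family of algebraic families, in the spirit of the original argument of Roitman. First I would recall the precise mechanism by which two cycles $Z_1-Z_2$ and $Z_1'-Z_2'$ are rationally equivalent: by definition this means that $(Z_1+Z_2')-(Z_1'+Z_2)$ is a sum of divisors of rational functions on subvarieties $W\subset X$ of dimension $\dim X - p + 1$. Fixing the degree $d$ (and hence bounding the degrees of all cycles involved), one bounds the degrees of the subvarieties $W$ and of the graphs of the rational functions $f\in k(W)^\times$ that can occur. Consequently the data $(W, f)$ realizing such a rational equivalence, for cycles of bounded degree, is itself parametrized by a countable union of quasi-projective varieties $T_\alpha$ (countability entering because one must allow arbitrarily many subvarieties $W$ and arbitrarily high degree for the functions $f$, i.e. a union over $\NN$).

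Next I would set up the incidence correspondence. For each index $\alpha$ in this countable family, consider the locus
$$
\bcZ_\alpha \subset \bigl(C^p_d(X)\times C^p_d(X)\bigr)\times\bigl(C^p_d(X)\times C^p_d(X)\bigr)\times T_\alpha
$$
cut out by the conditions that the prescribed rational equivalence with parameter in $T_\alpha$ carries $(Z_1,Z_2)$ to $(Z_1',Z_2')$; the point is that "being the divisor of a specified rational function" is a Zariski-closed condition on the cycle-theoretic data, so each $\bcZ_\alpha$ is Zariski closed. Projecting to the first factor $C^p_d(X)\times C^p_d(X)$ and to the last factors, one sees that the graph of the equivalence relation "$\theta^p_d$ takes the same value" is the image of $\bigcup_\alpha \bcZ_\alpha$ under a proper map (after compactifying $T_\alpha$, or using that the cycle data has bounded degree so the relevant projection is proper). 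A countable union of Zariski closed sets pushed forward along a proper morphism is again a countable union of Zariski closed sets. Slicing this relation over a fixed point $(Z_1^0,Z_2^0)$ in the second copy then exhibits the fiber $\theta^{p\,-1}_d\bigl(\theta^p_d(Z_1^0,Z_2^0)\bigr)$ as a countable union of Zariski closed subsets of $C^p_d(X)\times C^p_d(X)$.

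The main obstacle I anticipate is the bookkeeping of \emph{degree bounds}: one must check that once $d$ is fixed, all the auxiliary data — the supporting subvarieties $W$, the rational functions $f$ (equivalently their graphs in $W\times\PR^1$), and the intermediate cycles appearing when one chains together several elementary rational equivalences — can be taken to have degree bounded by an explicit function of $d$ and $N$, at the cost of enlarging the countable index set. Without such bounds the parameter spaces $T_\alpha$ would not be of finite type and the projections would fail to be proper, so this is where the real content lies. A clean way to handle it is to invoke the standard presentation of $\CH^p(X)$ via the exact sequence involving $\bigoplus_{W} k(W)^\times$ and to note, as in Roitman's paper, that one may always refine a rational equivalence so that each individual elementary step moves cycles of degree at most $d$; the "countable" in the statement then simply records the union over the (countably many) possible combinatorial types and degree data of such refined equivalences. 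Everything else is a routine verification that the conditions defining $\bcZ_\alpha$ are closed.
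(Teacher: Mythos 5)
Your overall strategy --- parametrizing the data $(W,f)$ witnessing a rational equivalence by countably many quasi-projective varieties $T_\alpha$, forming incidence loci, and pushing forward --- is the classical Roitman-style route, and in outline it is viable; the paper takes an equivalent but differently packaged route, encoding a rational equivalence by a common effective cycle $Z'$ together with a morphism $f\in\Hom^v(\PR^1,C^p_{d+u}(X))$ satisfying $f(0)=Z_1+Z'$, $f(\infty)=Z_2+Z'$, so that the relevant sets $W^{u,v}_d$ arise as fibred products of the evaluation map with an addition map of Chow varieties. However, there is a genuine gap in your argument, and it sits exactly where you declare the remaining work to be routine. The real difficulty is not the degree bookkeeping (which, as you say, is absorbed into the countable index set) but the difference between \emph{constructible} and \emph{closed}. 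Your parameter spaces $T_\alpha$ are only quasi-projective: inside a Chow variety of cycles $\Gamma\subset X\times\PR^1$, the locus of graphs of rational functions on subvarieties $W$ (equivalently, of cycles all of whose components dominate $\PR^1$) is open, not closed, and the assignment $\Gamma\mapsto \Gamma\cdot(X\times\{0\})-\Gamma\cdot(X\times\{\infty\})$ does not automatically extend to the boundary of a compactification. Consequently the projection of $\bcZ_\alpha$ away from the $T_\alpha$-factor is a priori only a constructible set, and a constructible set need not be a countable union of Zariski closed subsets (over an uncountable field, $\AF^1\smallsetminus\{0\}$ is not one). Neither of your two proposed fixes resolves this: compactifying $T_\alpha$ obliges you to prove that boundary points of the closure of $\bcZ_\alpha$ still encode rational equivalences between the limiting cycles, while ``bounded degree'' makes the Chow varieties proper but does nothing for $T_\alpha$ itself.

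This missing specialization statement is precisely what the paper spends the bulk of its proof on: given a point $(Z_1,Z_2)$ in the closure of the image $W^{0,v}_d$ of the evaluation map $e_{0,\infty}:\Hom^v(\PR^1,C^p_d(X))\to C^p_{d,d}(X)$, it chooses a curve in $W^{0,v}_d$ limiting to $(Z_1,Z_2)$, lifts it to a curve $T$ in the Hom scheme, passes to the normalization $\tilde T$ of its compactification, resolves the indeterminacy of the induced rational map $\tilde T\times\PR^1\dasharrow C^p_d(X)$, and restricts to $\{P\}\times\PR^1$ for $P$ a point of $\tilde T$ lying over $(Z_1,Z_2)$, thereby producing a morphism $\PR^1\to C^p_d(X)$ joining $Z_1$ to $Z_2$ and showing the limit point is again in $W^{0,v}_d$. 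Some such limiting argument --- or, alternatively, a proof that the boundary of the image of each $\bcZ_\alpha$ is covered by the images of other $\bcZ_\beta$ --- must be supplied before your proof is complete.
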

\begin{proof}
The proof of this theorem follows closely the approach present in [\cite{R}, Theorem 1], [\cite{Vo}, Lemma 10.7]. This theorem is also proved in [\cite{Ba}, Proposition 4.1.1] and [\cite{BG}, Proposition 8], but for the sake of completeness of the arguments and for the importance of this theorem in this paper, we include a proof of it.

To prove this we consider the following reformulation of the definition of rational equivalence. Let $Z_1,Z_2$ be two codimension $p$ cycles. They are rationally equivalent if there exists an effective cycle $Z'$, such that $Z_1+Z',Z_2+Z'$ belong to $C^p_d(X)$ for some fixed $d$, and there exists a regular morphism $f$ from $\PR^1$ to $C^p_d(X)$, such that
$$f(0)=Z_1+Z',\quad f(\infty)=Z_2+Z'\;.$$

Let us consider two cycles $Z_1,Z_2$ belonging to ${\theta^p_d}^{-1}(z)$ for some rational equivalence class $z$. Then $Z_1,Z_2$ belong to ${\theta^p_d}^{-1}(z)$ means that $Z_1,Z_2$ are rationally equivalent. So there exists $Z',f$ as above such that
$$f(0)=Z_1+Z',\quad f(\infty)=Z_2+Z'\;.$$
Thus it is natural to consider the following subvarieties of $C^p_d(X)\times C^p_d(X)$ denoted by $W^{u,v}_d$, given as the collection of all $(Z_1,Z_2)$ so that there exist $Z'\in C^p_{u}(X)$ $f$ in $\Hom^v(\PR^1,C^p_{d+u}(X))$, for some positive integer $u$ satisfying
$$f(0)=Z_1+Z',\quad f(\infty)=Z_2+Z'\;.$$
Here $\Hom^v(\PR^1,C^p_{d+u}(X))$ is the Hom-scheme of degree $v$ morphisms from $\PR^1$ to $C^p_{d+u}(X)$.
For working purpose denote $\prod_{i=1}^n C^p_{d_i}(X)$ as $C^p_{d_1,\cdots,d_n}(X)$.

  Let
  $$
  e:\Hom ^v(\PR ^1,C^p_{d+u,d+u}(X))\to
  C^p_{d+u,d+u}(X)
  $$
be the evaluation morphism sending $f:\PR ^1\to C^p_{d+u,d+u}(X)$ to the ordered pair $(f(0),f(\infty ))$, and let us consider the diagonal in $C^p_{u,u}(X)$ and multiply with $C^p_{d,d}(X)$, call it $F$ and let
  $$
  s:F\to C^p_{d+u,d+u}(X)
  $$
be the regular morphism sending $(Z_1,Z_2)$ to $(Z_1+Z',Z_2+Z')$. The two morphisms $e$ and $s$ allow to consider the fibred product
  $$
  V=\Hom ^v(\PR ^1,C^p_{d+u,d+u}(X))\times _{C^p_{d+u,d+u}(X)}F\; .
  $$
This $V$ is a closed subvariety in the product
  $$
  \Hom ^v(\PR ^1,C^p_{d+u,d+u}(X))\times F
  $$
over $\Spec (k)$ consisting of quintuples $(f,Z_1,Z_2,Z')$ such that
  $$
  e(f)=s(Z_,Z_2,Z')\; ,
  $$
i.e.
  $$
  (f(0),f(\infty ))=(Z_1+Z',Z_2+Z')\; .
  $$
The latter equality gives
  $$
  \pr(V)\subset W_d^{u,v}\; .
  $$
where $\pr$ is the projection from $V$ to $C^p_d(X)\times C^p_d(X)$.

Vice versa, if $(Z_1,Z_2)$ is a closed point of $W_d^{u,v}$, there exists a regular morphism
  $$
  f\in \Hom ^v(\PR ^1,C^p_{d+u,d+u}(X))
  $$
with $ f(0)=Z_1+Z'$ and $ f(\infty )=Z_2+Z'$. Then $(f,Z_1,Z_2,Z')$ belongs to $V$.

So the set $W_d^{u,v}$ is equal to $\pr(V)$. Since $V$ itself a quasi-projective variety, $W_d^{u,v}$ is a constructible subset in the product $C^p_d(X)\times C^p_d(X)$.

Suppose that $(Z_1,Z_2)$ is in $W_d^{u,v}$. Then there exists $f$ in $\Hom^v(\PR^1,C^p_{d+u}(X))$, $Z'$ in $C^p_u(X)$ such that

$$f(0)=Z_1+Z',\quad f(\infty)=Z_2+Z'\;.$$
This immediately imply that $(Z_1+Z',Z_2+Z')$ is in $W_{d+u}^{0,v}$. On the other hand, consider the map
$$\tilde{s}:C^p_{d,d}(X)\times \Delta_{C^p_{u,u}(X)}\to C^p_{d+u,d+u}(X)$$
given by
$$(Z_1,Z_2,Z')\mapsto (Z_1+Z',Z_2+Z')\;.$$
By the above we have that
$$W_d^{u,v}\subset \pr_{1,2}(\tilde{s}^{-1}(W_{d+u}^{0,v}))\;.$$
Conversely suppose that $(Z'_1,Z'_2)$ belongs to $\pr_{1,2}(\tilde{s}^{-1}(W_{d+u}^{0,v}))$. Then $(Z'_1,Z'_2)$ is of the form $(Z_1+Z',Z_2+Z')$, such that there exists $f$ in $\Hom^v(\PR^1,C^p_{d+u}(X))$ satisfying
$$f(0)=Z_1+Z',\quad f(\infty)=Z_2+Z'\;.$$
This tells us that $(Z_1,Z_2)$ belongs to $W_d^{u,v}$.
Hence we have that
  $$
  W_d^{u,v}=\pr _{1,2}(\tilde s^{-1}(W_{d+u}^{0,v}))\; .
  $$

Since $\tilde s$ is continuous and $\pr _{1,2}$ is proper,
  $$
  \bar W_d^{u,v}=
  \pr _{1,2}(\tilde s^{-1}(\bar W_{d+u}^{0,v})\; .
  $$
Thus to prove  the theorem it is enough to show that $\bar W_d^{0,v}$ is contained in $W_d$.

Let $(Z_1,Z_2)$ be a closed point of $\bar W_d^{0,v}$. If $(Z_1,Z_2)$ is in $W_d^{0,v}$, then it is also in $W_d$. Suppose
  $$
  (Z_1,Z_2)\in \bar W_d^{0,v}\smallsetminus W_d^{0,v}\; .
  $$
Let $W$ be an irreducible component of the quasi-projective variety $W_d^{0,v}$ whose Zariski closure $\bar W$ contains the point $(Z_1,Z_2)$. Let $U$ be an affine neighbourhood of $(Z_1,Z_2)$ in $\bar W$. Since $(Z_1,Z_2)$ is in the closure of $W$, the set $U\cap W$ is non-empty.

Let us show that we can always take an irreducible curve $C$ passing through $(Z_1,Z_2)$ in $U$. Indeed, write $U$ as $\Spec (A)$. It is enough to show that there exists a prime ideal in $\Spec (A)$ of height $n-1$, where $n$ is the dimension of $\Spec (A)$, where $A$ is Noetherian. Since $A$ is of dimension $n$ there exists a chain of prime ideals
  $$
  \gop _0 \subset \gop _1 \subset \cdots \subset \gop_n=\gop
  $$
of maximum length. Now consider the subchain
   $$
   \gop _0 \subset \gop _1 \subset \cdots \subset \gop _{n-1}\; .
   $$
This is a chain of prime ideals and $\gop _{n-1}$ is a prime ideal of height $n-1$, so we get an irreducible curve.

Let $\bar C$ be the Zariski closure of $C$ in $\bar W$. Two evaluation regular morphisms $e_0$ and $e_{\infty }$ from $\Hom ^v(\PR ^1,C_d^p(X))$ to $C_d^p(X)$ give the regular morphism
  $$
  e_{0,\infty }:\Hom ^v(\PR ^1,C_d^p(X))\to C_{d,d}^p(X)\; .
  $$
Then $W_d^{0,v}$ is exactly the image of the regular morphism $e_{0,\infty }$, and we can choose a quasi-projective curve $T$ in $\Hom ^v(\PR ^1,C_d^p(X))$, such that the closure of the image $e_{0,\infty }(T)$ is $\bar C$.

Consider the curve $C$ in $W$,  it is contained in $W_d^{0,v}$. We know that the image of $e_{0,\infty }$ is $W_d^{0,v}$. Consider the inverse image of $\bar C$ under the morphism $e_{0,\infty }$. Since $\bar C$ is a curve, the dimension of $e_{0,\infty }^{-1}(C)$ is greater than or equal than $1$. So it contains a curve. Consider two points on $\bar C$, consider their inverse images under $e_{0,\infty }$. Since $\Hom ^v(\PR ^1,C^p_d(X))$ is a quasi  projective variety, $e_{0,\infty }^{-1}(\bar C)$ is also quasi-projective, we can embed it into some $\PR ^m$ and consider a smooth hyperplane section through the two points fixed above. Continuing this procedure we get a curve containing these two points and contained in $e_{0,\infty }^{-1}(C)$. Therefore we get a curve $T$ mapping onto $\bar C$. So the closure of the image of $T$ is $\bar C$.

Now, as we have mentioned above, $\Hom ^v(\PR ^1, C _d^p(X))$ is a quasi-projective variety. This is why we can embed it into some projective space $\PR ^m$. Let $\bar T$ be the closure of $T$ in $\PR ^m$ and $\tilde T$ be the normalization of $\bar T$. Let $\tilde T_0$ be the pre-image of $T$ in $\tilde T$. Consider the composition
  $$
  f_0:\tilde T_0\times \PR ^1\to T\times \PR ^1\subset
  \Hom ^v(\PR ^1,C_d^p(X))\times \PR ^1
  \stackrel{e}{\to }C_d^p(X)\; ,
  $$
where $e$ is the evaluation morphism $e_{\PR ^1,C_d^p(X)}$. The regular morphism $f_0$ defines a rational map
  $$
  f:\tilde T\times \PR ^1\dasharrow C_d^p(X)
  $$
Then by resolution of singularities we get that $f$ could be extended to a regular map from $(\tilde T\times \PR^1)'$ to $C_d^p(X)$, where $(\tilde T\times \PR^1)'$ denote the blow up of $\tilde T\times \PR^1$ along the indeterminacy locus which is a finite set of points. Continue to call the strict transform of $\tilde T $ in the blow up as $\tilde T$, and the pre-image of $T$ as $\tilde {T_0}$

The regular morphism $\tilde T_0\to T\to \bar C$ extends to the regular morphism $\tilde T\to \bar C$. Let $P$ be a point in the fibre of this morphism at $(Z_1,Z_2)$. For any closed point $Q$ on $\PR ^1$ the restriction $f|_{\tilde T\times \{ Q\} }$ of the rational map $f$ onto $\tilde T\times \{ Q\} \simeq \tilde T$ is regular on the whole curve $\tilde T$, because $\tilde T$ is non-singular. Then
  $$
  (f|_{\tilde T\times \{ 0\} })(P)=Z_1
  \qquad \hbox{and}\qquad
  (f|_{\tilde T\times \{ \infty \} })(P)=Z_2\; .
  $$
$$f:\{P\}\times \PR^1\to C^p_d(X)$$
has the property that
$$f(0)=Z_1,\quad f(\infty)=Z_2\;.$$

Hence $W_d^{0,v}$ is Zariski closed and consequently $W_d^{u,v}$ is Zariski closed. Therefore ${\theta^p_d}^{-1}(z)$ is a countable union of Zariski closed subsets in the product $C^p_{d,d}(X)$.
\end{proof}
By  Theorem \ref{theorem1}, we can define the dimension of the fibers of $\theta^p_d$ to be the maximum of the dimension of the Zariski closed subsets occuring in ${\theta^p_d}^{-1}(z)$, for $z$ in $\CH^p(X)_{\alg}$. Now consider the  subset of $C^p_{d}(X)\times C^p_{d}(X)$ consisting of  points  such that  the dimension of ${\theta^p_d}^{-1}(\theta^p_d)(Z)$ is not  constant as $Z$ varies. By the existence of Hilbert schemes this is a countable union of Zariski closed subsets of $C^p_{d}(X)\times C^p_d(X)$. Call this subset $B$. Then for $Z$ in the complement of $B$, the dimension of ${\theta^p_d}^{-1}(\theta^p_d)(Z)$ is constant and say $r$.

\begin{definition}
The dimension of the image of $\theta^p_d$ is defined to be equal to $2\dim(C^p_{d}(X))-r$.
\end{definition}
Suppose that there exists a codimension $p$ prime cycle on $X$ of degree $e$. Then this prime cycle gives rise to an embedding of $C^p_{d}(X)$ into $C^p_{d+e}(X)$. Hence we have
$$\dim(C^p_d(X))\leq \dim(C^p_{d+e}(X))\;.$$
Hence we can define the limit superior of the
$$\dim(\im(\theta^p_d))\;.$$
We say that $\CH^p(X)_{\alg}$ is infinite dimensional (under the assumption (*)) if
$$\limsup_d \dim(\im(\theta^p_d))=\infty$$
and finite dimensional otherwise.

\begin{theorem}
\label{theorem4}
Assume (*) and that there exists a linear codimension $p$ subspace (prime cycle of  degree $1$) of $X$.
Then the group $\CH^p(X)_{\alg}$ is representable if and only if it is finite dimensional.
\end{theorem}
\begin{proof}
The proof follows the approach of [\cite{Vo}, Proposition 10.10] for symmetric powers and zero cycles.
Suppose that $\CH^p(X)_{\alg}$ is representable. Then there exists $d$ such that $\theta^p_d$ is surjective. For every integer $n$ consider the subset
$$R\subset C^p_n(X)\times C^p_n(X)\times C^p_d(X)\times C^p_d(X)$$
consisting of quadruples
$$(Z_1,Z_2,Z_1',Z_2')$$
such that
$$\theta^p_n(Z_1,Z_2)=\theta^p_d(Z'_1,Z'_2)\;.$$
As  $\theta^p_d$ is surjective, it follows that the projection
$$\pr_1:R\to C^p_n(X)\times C^p_n(X)$$
is surjective. Now by Theorem \ref{theorem1}, $R$ is a countable union of Zariski closed subsets in the ambient variety $C^p_n(X)\times C^p_n(X)\times C^p_d(X)\times C^p_d(X)$. We prove it as a separate lemma:

\begin{lemma}
The set $R$ is a countable union of Zariski closed subsets in $C^p_n(X)\times C^p_n(X)\times C^p_d(X)\times C^p_d(X)$.
\end{lemma}

\begin{proof}
This follows from the above Theorem \ref{theorem1}, but we present a proof for the convenience of the reader.
Let $Z_1,Z_2$ be two codimension $p$ cycles. They are rationally equivalent if there exists a positive cycle $Z'$, such that $Z_1+Z',Z_2+Z'$ belong to $C^p_d(X)$ for some fixed $d$, and there exists a regular morphism $f$ from $\PR^1$ to $C^p_d(X)$, such that
$$f(0)=Z_1+Z',\quad f(\infty)=Z_2+Z'\;.$$

Let us consider two cycles $(Z_1,Z_2,Z_1',Z_2')$ belonging to $R$. That would mean that the cycle class of $Z_1-Z_2$ is rationally equivalent to that of $Z_1'-Z_2'$.. So there exists a positive cycle $Z$ and a regular map $f$ from $\PR^1$ to $C^p_{d+n+u}(X)$  such that
$$f(0)=Z_1+Z_2'+Z,\quad f(\infty)=Z_1'+Z_2+Z'\;.$$
So it is natural to consider the following subvarieties of $C^p_{d+n+u}(X)\times C^p_{d+n+u}(X)$ denoted by $W^{u,v}_{d+n}$. It is  given by the collection of all elements  in the image of $C^p_d(X)\times C^p_d(X)\times C^p_n(X)\times C^p_n(X)$ in $C^p_{d+n+u}(X)\times C^p_{d+n+u}(X)$, under the natural map, given by
$$(Z_1,Z_2,Z_1',Z_2')\mapsto (Z_1+Z_2',Z_1'+Z_2)$$
Call this image as $F$.

so that there exist $Z\in C^p_{u}(X)$ and  $f$ in $\Hom^v(\PR^1,C^p_{d+n+u}(X))$, for some positive integer $u$ satisfying
$$f(0)=Z_1+Z_2'+Z,\quad f(\infty)=Z_2+Z_1'+Z'\;.$$
Here $\Hom^v(\PR^1,C^p_{d+n+u}(X))$ is the Hom-scheme of degree $v$ morphisms from $\PR^1$ to $C^p_{d+n+u}(X)$.
For working purpose denote $\prod_{i=1}^n C^p_{d_i}(X)$ as $C^p_{d_1,\cdots,d_n}(X)$.

  Let
  $$
  e:\Hom ^v(\PR ^1,C^p_{d+n+u,d+n+u}(X))\to
  C^p_{d+n+u,d+n+u}(X)
  $$
be the evaluation morphism sending $f:\PR ^1\to C^p_{d+n+u,d+n+u}(X)$ to the ordered pair $(f(0),f(\infty ))$, and let us consider the diagonal in $C^p_{u,u}(X)$ and multiply with  $F$ and consider:
  $$
  s:F\times \Delta_{C^p_{u,u}(X)}\to C^p_{d+n+u,d+n+u}(X)
  $$
 the regular morphism sending $(Z_1+Z_2',Z_2+Z_1')$ to $(Z_1+Z_2'+Z,Z_2+Z_1'+Z')$. The two morphisms $e$ and $s$ allow to consider the fibred product
  $$
  V=\Hom ^v(\PR ^1,C^p_{d+n+u,d+n+u}(X))\times _{C^p_{d+n+u,d+n+u}(X)}(F\times \Delta_{C^p_{u,u}(X)})\; .
  $$
This $V$ is a closed subvariety in the product
  $$
  \Hom ^v(\PR ^1,C^p_{d+n+u,d+n+u}(X))\times F \times \Delta_{C^p_{u,u}(X)}
  $$
over $\Spec (k)$ consisting of tuples $(f,Z_1+Z_2',Z_2+Z_1',Z)$ such that
  $$
  e(f)=s(Z_1+Z_2',Z_2+Z_1',Z)\; ,
  $$
i.e.
  $$
  (f(0),f(\infty ))=(Z_1+Z_2'+Z,Z_2+Z_1'+Z)\; .
  $$
The latter equality gives
  $$
  V= W_{d+n}^{u,v}\; .
  $$

Vice versa, if $(Z_1+Z',Z_2+Z_1')$ is a closed point of $W_{d+n}^{u,v}$, there exists a regular morphism
  $$
  f\in \Hom ^v(\PR ^1,C^p_{d+n+u,d+n+u}(X))
  $$
with $ f(0)=Z_1+Z_2'+Z$ and $ f(\infty )=Z_2+Z_1'+Z$. Then $(f,Z_1+Z_2',Z_2+Z_1',Z)$ belongs to $V$.

So the set $W_{d+n}^{u,v}$ is itself a quasi-projective variety.

Suppose that $(Z_1+Z_2',Z_2+Z_1')$ is in $W_{d+n}^{u,v}$. Then there exists $f$ in $\Hom^v(\PR^1,C^p_{d+n+u}(X))$, $Z$ in $C^p_u(X)$ such that

$$f(0)=Z_1+Z_2'+Z,\quad f(\infty)=Z_2+Z_1'+Z\;.$$
This equality immediately imply that $(Z_1+Z_2'+Z,Z_2+Z_1'+Z)$ is in $W_{d+n+u}^{0,v}$. On the other hand, consider the map
$$\tilde{s}:F\times \Delta_{C^p_{u,u}(X)}\to C^p_{d+n+u,d+n+u}(X)$$
given by
$$(Z_1+Z_2',Z_2+Z_1',Z)\mapsto (Z_1+Z_2'+Z,Z_2+Z_1'+Z)\;.$$
By the above
$$W_{d+n}^{u,v}\subset \pr_{1,2}(\tilde{s}^{-1}(W_{d+n+u}^{0,v}))\;.$$
Conversely suppose that $(Z_1+Z_2',Z_1'+Z_2)$ belongs to $\pr_{1,2}(\tilde{s}^{-1}(W_{d+n+u}^{0,v}))$. Then there exists $f$ in $\Hom^v(\PR^1,C^p_{d+n+u}(X))$ satisfying
$$f(0)=Z_1+Z_2'+Z',\quad f(\infty)=Z_2+Z_1'+Z'\;.$$
This gives, $(Z_1+Z_2',Z_2+Z_1')$ belongs to $W_{d+n}^{u,v}$.
Hence
  $$
  W_{d+n}^{u,v}=\pr _{1,2}(\tilde s^{-1}(W_{d+n+u}^{0,v}))\; .
  $$

Since $\tilde s$ is continuous and $\pr _{1,2}$ is proper,
  $$
  \bar W_{d+n}^{u,v}=
  \pr _{1,2}(\tilde s^{-1}(\bar W_{d+n+u}^{0,v})\; .
  $$
So to prove the second assertion of the proposition it is enough to show that $\bar W_{d+n}^{0,v}$ is contained in $W_{d+n}$. This follows by arguing as in \ref{theorem1}.

\end{proof}

Let us write
$$R=\cup_i R_i$$
where each $R_i$ is a Zariski closed subset in $C^p_n(X)\times C^p_n(X)\times C^p_d(X)\times C^p_d(X)$. Considering the projection from $\pr_1$ from $R$ to $C^p_n(X)\times C^p_n(X)$, we have $\cup_i \pr_1(R_i)=C^p_n(X)\times C^p_n(X)$. But $C^p_n(X)\times C^p_n(X)$ can be uniquely decomposed into finitely many Zariski closed irreducible subsets of maximal dimension. Using the fact that the ground field $k$ is uncountable, it will follow that there exists finitely many components $R_1,\cdots,R_m$ of $R$, such that $\cup_i R_i=R'$ surjects onto $C^p_n(X)\times C^p_n(X)$. So we have that
$$\dim R'\geq 2\dim C^p_d(X)\;.$$
Now consider $(Z_1,Z_2,Z'_1,Z'_2)$ in $\cup R_i$, then we have
$$\dim_{(Z_1,Z_2)}R'\cap (C^p_n(X)\times C^p_n(X)\times (Z'_1,Z'_2))\geq 2\dim C^p_n(X)-2\dim C^p_d(X)\;.$$
This number on the right hand side is bigger than zero if we take sufficiently large $n$ such that $C^p_n(X)$ contains $C^p_d(X)$. Then the above is an algebraic set contained in
$${\theta^p_{n}}^{-1}(\theta^p_d(Z'_1,Z_2'))\;.$$
As $(Z_1,Z_2)$ is arbitrary and the projection
$$R'\to C^p_n(X)\times C^p_n(X)$$
is surjective, we have that dimension of ${\theta^p_n}^{-1}(Z_1,Z_2)$ is atleast $2(\dim (C^p_n(X))-\dim(C^p_d(X)))$. Hence the dimension of the image of $\theta^p_n$ is bounded by $2\dim C^p_d(X)$. Therefore $\CH^p(X)_{\alg}$ is finite dimensional.

\medskip

Now suppose that $\CH^p(X)_{\alg}$ is finite dimensional. We have to prove that there exists $d$ such that $\theta^p_d$ is surjective. Let $d$ be such that
$$\dim(\im(\theta^p_d))=\dim(\im(\theta^p_{d+e}))$$
for all positive integer $e\geq 1$. Let $V$ be a subvariety of degree $1$ and codimension $p$ (this exists by the assumption), giving an embedding of $C^p_{d}(X)\times C^p_d(X)$ into $C^p_{d+1}(X)\times C^p_{d+1}(X)$. Call  the embedding as $i_V$, then we have
$$\theta^p_{d+1}\circ i_V=\theta^p_d\;.$$
Let $F$ be the fiber of $\theta^p_d$ passing through a general point $(Z_1,Z_2)$ of $C^p_d(X)\times C^p_d(X)$, let $F'$ be the fiber of $\theta^p_{d+1}$ through a general point in $C^p_{d+1}(X)\times C^p_{d+1}(X)$. By assumption we have that
$$2\dim(C^p_d(X))-r_1=2\dim(C^p_{d+1}(X))-r_2$$
where $r_1,r_2$ are dimensions of $F,F'$. Thus
$$r_2-r_1=2n$$
where $n=\dim(C^p_{d+1}(X))-\dim(C^p_d(X))\;.$
Let $F''$ be the fiber of a $\theta^p_{d+1}$ such that it passes through $(Z_1+V,Z_2+V)=i_V(Z_1,Z_2)$.
Then by the definition of dimension of the fiber of $\theta^p_{d+1}$ we have that
$$\dim(F'')\geq \dim(F')\;.$$
Now consider the subset
$$R=\{(Z_1,Z_2,Z'_1,Z'_2):\theta^p_{d+1}(Z_1,Z_2)=\theta^p_d(Z'_1,Z'_2)\}\;.$$
The projection from $R$ to $C^p_d(X)\times C^p_d(X)$ is surjective, so there exists finitely many irreducible subsets containing $(Z_1+V,Z_2+V,Z_1,Z_2)$ such that there union $R'$ dominates  $C^p_d(X)\times C^p_d(X)$. Fiber of this projection from $R'$ is of dimension greater or equal than that of $F''$. So it is greater than or equal to $\dim(F)+2\dim C^p_d(X)$. The fibers of the first projection
$$p:R'\to C^p_{d+1}(X)\times C^p_{d+1}(X)$$
are of dimension at most $\dim(F)$. So we have
$$\dim p(R')\geq \dim R'-\dim F\geq \dim F''+2\dim (C^p_d(X))-\dim(F)$$
$$\geq \dim F'-\dim F+2\dim(C^p_d(X)=\dim (C^p_{d+1}(X)\times C^p_{d+1}(X))$$
Therefore $p$ is surjective from $R'$ to $C^p_{d+1}(X)\times C^p_{d+1}(X)$ and hence $\im(\theta^p_d)=\im(\theta^p_{d+1})$. By induction we have $$\im(\theta^p_d)=\im(\theta^p_{d+e})$$
for all positive integer $e\geq 1$. Also observe that
$$\cup_i \im(\theta^p_i)=\CH^p(X)_0\;.$$
This implies that $\theta^p_d$ is surjective onto $\CH^p(X)_0$. By the assumption of the theorem we have $\CH^p(X)_0$ is equal to $\CH^p(X)_{\alg}$. Therefore $\theta^p_d$ is surjective onto $\CH^p(X)_{\alg}$.
\end{proof}
Now our aim is to detect the kernel of the Abel-Jacobi map for higher dimensional cycles. Let's recall that the Abel-Jacobi map has domain ${\CH^p(X)}_{\hom}$ and target the Intermediate Jacobian $IJ^p(X)$ given by
$$H^{2p-1}(X,\CC)/F^p H^{2p-1}(X,\CC)\oplus H^{2p-1}(X,\ZZ)\;.$$
The first theorem in this direction is to relate the representablity of the Chow group of codimension $p$ (algebraically trivial) cycles with zero cycles on a smooth projective curve. Under the assumption as in Theorem \ref{theorem4} we have the following theorem:
\begin{theorem}
Suppose that there exists a smooth projective curve $C$, and a correspondence $\Gamma$ on $C\times X$ such that
$$\Gamma_*:\CH^1(C)_{\alg}\to \CH^p(X)_{\alg}$$
is surjective. Then $\CH^p(X)_{\alg}$ is representable.
\end{theorem}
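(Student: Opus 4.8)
The plan is to bootstrap from the classical representability of $J(C)=\CH^1(C)_{\hom}$: every class in $J(C)$ is a difference of two effective divisors on $C$ of a fixed degree, and these divisors can be transported through the correspondence $\Gamma$ fibrewise so as to land in a single Chow variety $C^p_d(X)$. Concretely, I would first write $\Gamma=\Gamma^{+}-\Gamma^{-}$ as a difference of effective codimension $p$ cycles on $C\times X$. For $\bullet\in\{+,-\}$ the projection $\Gamma^{\bullet}\to C$ has generic fibre a codimension $p$ cycle on $X$ of some fixed degree $d_0^{\bullet}$ with respect to the fixed embedding $X\subset\PR^N$; since $\dim\Gamma^{\bullet}=\dim X-p+1$ and it fibres over the curve $C$, the locus $B^{\bullet}\subset C$ over which the fibre has excess dimension is finite. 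For an effective divisor $D=\sum m_q q$ on $C$ whose support avoids $B^{\bullet}$ I set $\Gamma^{\bullet}_D:=\sum m_q\Gamma^{\bullet}_q$, an effective codimension $p$ cycle on $X$ of degree $(\deg D)d_0^{\bullet}$, and record the compatibility $\Gamma^{\bullet}_*([D])=[\Gamma^{\bullet}_D]$ in $\CH^p(X)$.

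Next, given $\alpha\in\CH^p(X)_{\hom}$, I would use the hypothesis to write $\alpha=\Gamma_*(\beta)$ with $\beta\in J(C)$, and Jacobi inversion to write $\beta=[D_1]-[D_2]$ with $D_1,D_2$ effective of degree $g=g(C)$ (one may assume $g\geq1$; if $g=0$ then $J(C)=0$, hence $\CH^p(X)_{\hom}=0$, which is trivially representable). Adding a fixed effective divisor $A$ of degree $g$ gives $\beta=[D_1+A]-[D_2+A]$ with $D_i+A$ of degree $2g$; since $\deg(D_i+A)=2g\geq 2g-1$, Riemann--Roch gives $\dim|D_i+A|=g\geq1$, so I can choose a general member $D_i'\in|D_i+A|$ whose support misses the finite set $B^{+}\cup B^{-}$. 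Then I would put $d:=2g(d_0^{+}+d_0^{-})$, $Z_1:=\Gamma^{+}_{D_1'}+\Gamma^{-}_{D_2'}$ and $Z_2:=\Gamma^{+}_{D_2'}+\Gamma^{-}_{D_1'}$; these are effective codimension $p$ cycles of degree $d$, so $(Z_1,Z_2)\in C^p_d(X)\times C^p_d(X)$, and by additivity of the correspondence action in $\Gamma$ together with the compatibility above,
$$\theta^p_d(Z_1,Z_2)=[Z_1-Z_2]=\Gamma^{+}_*([D_1']-[D_2'])-\Gamma^{-}_*([D_1']-[D_2'])=\Gamma_*(\beta)=\alpha.$$
Since $[D_1']-[D_2']=[D_1+A]-[D_2+A]=\beta$ lies in $J(C)$, the class $\alpha$ produced this way is automatically homologically trivial, and crucially $d$ does not depend on $\alpha$. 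Hence $\theta^p_d$ surjects onto $\CH^p(X)_{\hom}$, which is the asserted representability; alternatively one may feed this into the equivalence between representability and finite dimensionality established above.

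I expect the main obstacle to be making the fibrewise transport rigorous, i.e.\ the identity $\Gamma^{\bullet}_*([D])=[\Gamma^{\bullet}_D]$ for $D$ general. One must know that $\Gamma^{\bullet}$ meets $D\times X$ in the expected dimension, so that $\Gamma^{\bullet}\cdot p_C^{*}[D]$ is literally the fibre cycle $\Gamma^{\bullet}_D$ with no excess-intersection correction, and that $[D]\mapsto[\Gamma^{\bullet}_D]$ descends to rational equivalence classes on $C$; this is precisely why I isolate the finite excess locus $B^{\bullet}$ and only allow $D$ to move in a linear system of dimension at least $1$, which can always be arranged to avoid a prescribed finite set of points. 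The remaining ingredients --- additivity of the correspondence action, the degree bookkeeping that produces a uniform $d$, and the reduction to $g\geq1$ --- should be routine.
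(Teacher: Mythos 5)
Your proof is correct, and it is considerably more self-contained than the one in the paper. The paper disposes of this theorem in three lines: it observes that $\Sym^g C\times\Sym^g C\to\CH^1(C)_{\hom}$ is surjective, concludes that $\CH^1(C)_{\hom}$ and hence the image of $\Gamma_*$ is ``finite dimensional'', and then invokes the equivalence between finite dimensionality and representability established just before. The step ``therefore the image of $\Gamma_*$ is finite dimensional'' is precisely the point your argument makes rigorous: one must know that $\Gamma_*$ carries divisor classes coming from $\Sym^g C\times\Sym^g C$ into classes hit by $C^p_d(X)\times C^p_d(X)$ for a single $d$, and this is exactly what your decomposition $\Gamma=\Gamma^{+}-\Gamma^{-}$, the finite excess loci $B^{\bullet}$, and the degree bookkeeping deliver. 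In exchange for this extra care you obtain a direct proof that $\theta^p_d$ surjects onto $\CH^p(X)_{\hom}$ for an explicit uniform $d$, bypassing the representability--finite-dimensionality equivalence altogether; since that equivalence is the technical heart of the section, this is a genuine streamlining rather than just a rewriting. The only points you should state explicitly in a final version are (i) that the fibre degree $d_0^{\bullet}$ is constant on a dense open subset of $C$ by generic flatness, with the components of $\Gamma^{\bullet}$ not dominating $C$ absorbed into $B^{\bullet}$, and (ii) that $|D_i+A|$ is base-point free because its degree is $2g$, so a general member really does avoid the finite set $B^{+}\cup B^{-}$; both are standard and you have essentially flagged them already.
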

\begin{proof}
The proof is along the same line as in [\cite{Vo}, Proposition 10.12].
Note that the natural map from $\Sym^g C\times \Sym^g C$ to $\CH^1(C)_{\alg}$ is surjective, where $g$ is the genus of the curve $C$. Therefore  $\CH^1(C)_{\alg}$ is finite dimensional. Therefore the image of $\Gamma_*$ is finite dimensional. But $\Gamma_*$ is surjective. So $\CH^p(X)_{\alg}$ is finite dimensional hence representable by Theorem \ref{theorem4}.
\end{proof}

It is difficult to prove the converse, that is : suppose $\CH^p(X)_{\alg}$ is representable in the sense mentioned above as in Definition \ref{definition1} then does there exist a curve $C$ and a correspondence $\Gamma$ on $C\times X$, such that $\Gamma_*$ is onto? Let us consider the following situation:

I) The Chow group of codimension $p$ cycles are generated by linear subspaces, that is the natural map from $\CH_0(F(X))$ to $\CH^p(X)$ is surjective. Here $F(X)$ is the Fano variety of linear subspaces of codimension $p$. Under this condition, the assumption (*) is satisfied.

\begin{theorem}
\label{theorem2}
Let $X$ be as above. Suppose that $\CH^p(X)_{\alg}$ is representable. Then there exists a smooth projective curve $C$ and a correspondence $\Gamma$ on $C\times X$, such that
$$\Gamma_*:\CH^1(C)_{\alg}\to \CH^p(X)_{\alg}$$
is surjective.
\end{theorem}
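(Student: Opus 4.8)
\smallskip

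The plan is to adapt Voisin's treatment of zero-cycles in \cite{Vo}, using the Fano variety $F(X)$ of codimension-$p$ linear subspaces in place of $X$, and feeding in the two earlier results: the equivalence of representability with finite dimensionality, and Theorem \ref{theorem1}. Write $A:=\CH^p(X)_{\hom}$, and let $\mathcal L\subseteq F(X)\times X$ be the incidence variety, with fibre over $[L]$ the linear subspace $L\subseteq X$. By hypothesis (I) the map $\mathcal L_*\colon\CH_0(F(X))\to\CH^p(X)$ is surjective, and chasing the homological-equivalence filtrations through this surjection gives $\mathcal L_*\colon\CH_0(F(X))_{\hom}\twoheadrightarrow A$; in particular every class of $A$ is $\mathcal L_*(E_1-E_2)$ for effective zero-cycles $E_i$ on $F(X)$. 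On the smooth variety $B_e:=F(X)^e\times F(X)^e$ we have the universal difference cycle
$$\mathcal Z_e\ :=\ \textstyle\sum_{i=1}^e(p_i\times\id)^*\mathcal L-\sum_{i=1}^e(q_i\times\id)^*\mathcal L\ \in\ \CH^p(B_e\times X),$$
where $p_i,q_i$ are the projections to the $i$-th factor of the first, resp. second, $F(X)^e$; it induces $\psi_e\colon B_e\to A$, $((L_i)_i,(L'_i)_i)\mapsto\mathcal L_*(\sum_i[L_i]-\sum_i[L'_i])$. By Theorem \ref{theorem1}, pulled back along the natural map $B_e\to C^p_\bullet(X)\times C^p_\bullet(X)$, the fibres of $\psi_e$ are countable unions of Zariski-closed subsets, and the images $\psi_e(B_e)$ increase with $e$ and exhaust $A$.

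Since $A$ is representable, the theorem equating representability with finite dimensionality shows the sequence $(\dim\im\theta^p_e)_e$ is bounded, say by $M$, hence $\dim\im\psi_e\le M$ for all $e$ as well. The subsets $\psi_e(B_e)\subseteq A$ thus increase, are bounded in dimension, and the ground field is uncountable, so they stabilise as subsets of $A$; hence there is $e_0$ with $\psi_{e_0}(B_{e_0})=A$. Put $N:=\max(e_0,M)+1$, $B:=B_N$, $\psi:=\psi_N\colon B\to A$, and let $\Psi_N\colon B^{\times N}\to A$ send $(b_1,\dots,b_N)$ to $\sum_i\psi(b_i)$. Then $\Psi_N$ is surjective (it already realises $\psi_{e_0}$ after padding with constant subspaces), its fibres are countable unions of Zariski-closed subsets, and by the bound $M$ each such fibre contains a component of codimension $\le M$ in its component of $B^{\times N}$.

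Now choose a general complete-intersection curve $C_0\subset B$ of sufficiently high degree — large enough that $C_0$ can be made to pass through any $N$ prescribed general points of $B$ and that the incidence estimate below is transversal — let $C$ be its normalisation, $j\colon C\to B$ the induced morphism, and set $\Gamma:=(j\times\id_X)^*\mathcal Z_N\in\CH^p(C\times X)$, the pull-back being defined because $C\times X$ is smooth. For $c,c'\in C$ one computes $\Gamma_*([c]-[c'])=\psi(j(c))-\psi(j(c'))$, so the image of $\Gamma_*\colon\CH^1(C)_{\hom}\to\CH^p(X)_{\hom}$ is the subgroup of $A$ generated by $\psi(j(C))-\psi(j(C))$, equivalently by the $N$-fold sumset $\Psi_N(C_0^{\times N})=\{\sum_i\psi(b_i):b_i\in C_0\}$. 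So it suffices to show $\Psi_N(C_0^{\times N})=A$: then both $0$ and any given $a\in A$ lie in this sumset, forcing $a\in\langle\psi(j(C))-\psi(j(C))\rangle=\im\Gamma_*$.

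The assertion that a general high-degree complete-intersection curve $C_0\subset B$ satisfies $\Psi_N(C_0^{\times N})=A$ is the main obstacle. Fixing $a\in A$, the fibre $\Psi_N^{-1}(a)$ is a non-empty countable union of Zariski-closed subsets of $B^{\times N}$, at least one component $Z$ of which has codimension $\le M<N$; and, by an interpolation/incidence count, the products $C_0^{\times N}$ — as $C_0$ ranges over general high-degree complete intersections in $B$ — sweep out $B^{\times N}$ by subvarieties of dimension $N$, so that, $\operatorname{codim}Z\le M<N$ making the expected-dimensional intersection non-empty, a general $C_0^{\times N}$ meets $Z$, yielding $a\in\Psi_N(C_0^{\times N})$. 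Two points then need care: (a) that a single $C_0$ works for all $a\in A$ at once — handled because Theorem \ref{theorem1} leaves only countably many relevant components $Z$ and $k$ is uncountable, so a general $C_0$ simultaneously avoids the countably many low-dimensional strata to be avoided and meets a good $Z$ over each $a$; and (b) the transversality in the interpolation step, which dictates how large $\deg C_0$ must be, arranged uniformly over the countably many $Z$. Granting these, $\Gamma_*\colon\CH^1(C)_{\hom}\to\CH^p(X)_{\hom}$ is surjective, which is the assertion of the theorem.
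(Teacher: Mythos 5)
Your overall strategy is the same as the paper's (restrict the surjective parametrization by powers of $F(X)$ to a complete--intersection curve and pull back the incidence correspondence), but the step you yourself flag as ``the main obstacle'' is exactly where your argument breaks, and it is not a mere transversality technicality. You claim that since a component $Z$ of the fibre $\Psi_N^{-1}(a)$ has codimension $\le M<N$ in $B^{\times N}$, a general product of ample curves $C_0^{\times N}$ must meet $Z$ because the expected dimension of the intersection is positive. This is false: the divisors $\pr_i^*H$ are only nef on $B^{\times N}$, not ample, so positive expected dimension does not force non-empty intersection. Concretely, if $Z=W\times B^{\times(N-1)}$ with $W\subset B$ of codimension $2$ and $N\ge 3$, then $\codim Z=2<N$ but $C_0^{\times N}\cap Z=\emptyset$ for a general curve $C_0$, since a general curve misses a codimension-$2$ subset. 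Interpolation does not rescue this, because the points you would need $C_0$ to pass through depend on $a$, and the classes $a\in A$ (hence the relevant components $Z$) need not form a countable set.

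What is missing is precisely the content of the paper's proof: one must first show that a maximal-dimensional component $Z$ of a general fibre is \emph{non-degenerate} with respect to the product structure --- in the paper's formulation, $Z$ is not contained in any $\Sym^{d-i}F(X)+W$ with $\dim W<i$, equivalently every partial projection $p_I(\wt Z_0)$ has dimension $\ge |I|$ --- and this is derived from the stabilization of the generic fibre dimension (if $Z$ were degenerate, pushing it down to $\Sym^{d-i}F(X)$ would produce fibres of $\theta^p_{d-i}$ that are too large). Only then does the intersection statement hold, and even then it requires an actual argument (the paper's Lemma 2.8: the pullbacks of the ample divisor are nef, and the Hodge index theorem forces the product $D_1\cdots D_d$ to be positive). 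Your proposal asserts the conclusion of that lemma without either the non-degeneracy hypothesis that makes it true or the Hodge-index argument that proves it, so as written the proof has a genuine gap at its central step.
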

\begin{proof}
The proof of this theorem follows the approach of the proof of [\cite{Vo}, Proposition 10.12].
Consider the map $\theta^p_d$ from $C^p_{d}(X)$ to $\CH^p(X)_{\alg}$ given by
$$Z\mapsto Z-dL_0$$
where $L_0$ is a fixed linear subspace of $X$. Since $\CH^p(X)_{\alg}$ is actually generated by linear subspaces, the above map restricted to $\Sym^d F(X)$ is surjective, continue to call it $\theta^p_d$. Let us consider large $d$ such that $\dim(\im(\theta^p_d))$ is constant and equal to $K$. Then the dimension of a general fiber is equal to
$$\dim(\Sym^d F(X))-K\;.$$

Now we prove that an irreducible component $Z$ of maximal dimension of a general fiber of $\theta^p_d$ cannot be contained in a set of the form
$$\Sym^{d-i}F(X)+ W$$
where $W$ is in $\Sym^i F(X)$, $\dim W<i$ and the above $+$ means the image of the natural map from
$$\Sym^{d-i}F(X)\times W\to \Sym^{d}F(X)\;.$$
If possible, assume that $Z$ is contained in such a set. Note that the dimension of $Z$ is $\dim(\Sym^{d}F(X))-K$. So we have
$$\dim(\Sym^{d-i}F(X))\geq \dim(\Sym^d F(X))-K-i+1\;.$$
Let the dimension of $F(X)$ be $n$. Then the above says
$$n(d-i)\geq nd-K-i+1$$
which implies
$$i<K/(n-1)\;.$$
Consider the subset
$$Z'=\{(z,w)|z+w\in Z\}\subset \Sym^{d-i}F(X)\times W\;.$$
By definition this set dominates $Z$, hence is of dimension greater or equal than $nd-K$. So the general fibers of the second projection $\pr_2:Z'\to W$ are of dimension at least
$$nd-K-i+1\;.$$
Also note that
$$\theta^p_{d}(z+w)=\theta^p_{d-i}(z)+\theta^p_{i}(w)\;.$$
Since $\theta^p_d$ is constant along $Z$, we have $\theta^p_{d-i}$ is constant along $Z'_w$. Thus if $Z$ passes through a very general point of $\Sym^d F(X)$, then $Z'_w$ passes through a very general point of $\Sym^{d-i}F(X)$. So we have $\dim(Z'_w)$ is less than the dimension of a general fiber of $\theta^p_{d-i}$ for generic $w$.  Now the dimension of $Z'_w$ is greater than or equal to
$$nd-K-i+1$$
but the dimension of the fiber of $\theta^p_{d-i}$ is equal to $(d-i)n-K$ because $d-i>d-K/n-1$ can be chosen to be arbitrarily large.

Let us assume that $d\geq 2$, and we have $nd-K\geq d$. Consider the following lemma.

\begin{lemma}
\label{lemma1}
Let $Y$ be an ample hypersurface of $F(X)$ and let $Z$ be an irreducible subset of $\Sym^d F(X)$ not contained in any subset of the form
$$\Sym^{d-i}F(X)+W$$
with $W\subset \Sym^i F(X)$ and dimension of $W$ is less than $i$. Then $Z$ intersects $\Sym^d Y$, provided that $\dim(Z)\geq d$.
\end{lemma}
Therefore by applying the lemma we see that a general fiber of $\theta^p_d$ intersects $\Sym^d Y$, for sufficiently large $d$ and provided that $n\geq 2$. Hence $\theta^p_d$ and $\theta^p_d|_{\Sym^d Y}$ have same image and the later has image of bounded dimension. So we can apply the lemma again and finally get that $\theta^p_d$ and $\theta^p_d|_{\Sym^d C}$ have same image, where $C$ is a smooth projective curve obtained by intersecting $n-1$ many ample hypersurfaces. This proves the theorem.
\end{proof}

\smallskip

\textit{Proof of Lemma \ref{lemma1}}:

\begin{proof} The proof is along the line of [\cite{Vo}, Lemma 10.13]. Consider the quotient map $r:F(X)^d\to \Sym^d F(X)$. Let $r^{-1}(Z)=\wt{Z}$, let $\wt{Z_0}$ be a component of $\wt{Z}$ dominating $Z$. By the hypothesis we have the following:

for every $i\geq 1$ and every subset $I$ of cardinality $i$, we have $\dim p_I(\wt{Z_0})\geq i$, where $p_I$ is the projection from $F(X)^d$ to $F(X)^i$ corresponding to the set of indices. Since $\wt{Z_0}$ dominates $Z$, it is sufficient to prove that $\wt{Z_0}$ intersects $Y^d$, for an ample hypersurface $Y $  in $F(X)$. Consider a complete intersection $V$ in $\wt{Z_0}$, which is obtained by intersecting $\wt{Z_0}$ with finitely many ample hypersurfaces. So that dimension of $V$ is $d$. Then the hypotheses on $\wt{Z_0}$ implies that same would be true for  $V$. So without loss of generality we can assume that dimension of $\wt{Z_0}=d$. Consider a de-singularization $Z'$ of $\wt{Z_0}$. Consider the divisors
$$D_i:=(\pr_i\circ \tau)^{-1}(Y)$$
where $\tau$ is the natural map from $Z'$ to $\wt{Z_0}$. Now $Y$ is ample, so we have
$$(\pr_i\circ \tau)_*((\pr_i\circ\tau)^*(Y).C)=Y.(\pr_i\tau)_*C\geq 0$$
which means that $D_i$'s are numerically effective. Our claim will follow from the fact that $D_1\cap\cdots\cap D_d$ is non-empty. So we prove that $D_1\cap\cdots\cap D_d$ is non-empty. First suppose that $d=2$. We have $D_1,D_2$ two divisors numerically effective. Hence we have
$$D_1^2\geq 0, \quad D_2^2\geq 0$$
Suppose that $D_1.D_2=0$. Then the intersection matrix of $(D_1,D_2)$ is semipositive. So by the Hodge index theorem we have $D_1=rD_2$ for some integer $r$. Hence $(D_1+D_2)^2=0$.  But $D_1+D_2$ is the pull-back of an ample divisor on $F(X)\times F(X)$, under a generically finite map. So it is ample. Therefore $(D_1+D_2)^2>0$, which is a contradiction. The general case follows from this by induction on the dimension of $\wt{Z_0}.$
Consider the divisor ${D_1}|_{D_2\cap \cdots\cap D_{d-1}}$ and ${D_d}|_{D_2\cap\cdots\cap D_{d-1}}$ embedded in $D_2\cap\cdots\cap D_{d-1}$.
We know that $D_1.(D_1\cap D_2\cdots\cap D_{d-1})\geq 0$, which means,
$$(D_1)^2|_{D_2\cap\cdots\cap D_{d-1}}\geq 0$$
here we use the ampleness of the divisor $D_1$.
Similarly
$$(D_d)^2|_{D_2\cap\cdots\cap D_{d-1}}\geq 0\;.$$
Now suppose that $D_1\cap\cdots\cap D_d=\emptyset$. Again as above, applying Hodge index theorem we have that
$$(D_1+D_d)^2|_{D_2\cap\cdots\cap D_{d-1}}=0\;.$$
Then it follows that
$${\tau_*(D_1+D_d)^2}|_{\tau(D_2\cap\cdots D_{d-1})}=0\;.$$
Now consider the projection $\pr_{1,d}:F(X)^d\to F(X)\times F(X)$ to the $1$-st and the $d$-th coordinate. Observe that  $D_1+D_2$ is the pull-back of an ample line bundle on $F(X)\times F(X)$ via a generically finite map.
The above gives
$${\pr_{1,d}}_*((\pr^*_1(Y_1)+\pr_d^*(Y_1))^2)|_{\tau(D_2\cap\cdots\cap D_{d-1})}=0\;.$$
But $${\pr_{1,d}}_*(\pr^*_1(Y_1)+\pr_d^*(Y_1))$$
is the divisor obtained by pulling back $Y_1$ to $F(X)\times F(X)$ by the two projections and adding them. This is ample on $F(X)\times F(X)$, so its cohomology class is non-trivial. Note that there exists $D'_1,\cdots,D'_{d-1}$ respectively in the linear systems of $D_i$ for $i=2,\cdots,d-1$ such that the image of $\tau(D'_2\cap\cdots\cap D'_{d-1})$ by $\pr_{1,d}$ is of dimension greater or equal than two. This is because there exists a generic divisor $D'_{d-1}$ in the linear system of $D_{d-1}$  such that its image under $Z'\to F(X)^d\to \Sym^d F(X)$ is not contained in a subset of the form
$$\Sym^{d-1-i}Y_1+W$$
such that $i\geq 1$ and $\dim W<i$, otherwise the condition on $Z$ is violated. Therefore we have
$$p_I(\tau(D'_{d-1}))\geq i$$
where $i$ is the cardinality of the indexing set $I$, which is a subset of $\{1,\cdots,d-1\}$. Continuing this process we get that there exists $D'_2,\cdots,D'_{d-1}$, such that
$$p_I(\tau(D'_2\cap\cdots \cap D'_{d-1}))\geq 2$$ for an indexing set $I$ of cardinality less than or equal to two.
Then by ampleness
$${\pr_{1,d}}_*((\pr^*_1(Y_1)+\pr_d^*(Y_1))^2)|_{\tau(D'_2\cap\cdots\cap D'_{d-1})})$$
cannot be zero as $\dim(\tau(D'_2\cap\cdots\cap D'_{d-1}))\geq 2$.
\end{proof}

So when, $\CH^p(X)_{\alg}$ is representable and $X$ satisfies the assumption of Theorem \ref{theorem2}, then the above Theorem \ref{theorem2} and arguments present in \cite{Vo} give that there exists an abelian variety $A$ and a correspondence $\Gamma$ supported on $A\times F(X)$, such that $L_*\Gamma_*:A\to \CH_0(F(X))_{\alg}\to\CH^p(X)_{\alg}$ is surjective, where $L_*$ is the universal incidence correspondence given by
$$\{(x,L):x\in L\}\;.$$

This leads us to the following result.

\begin{theorem}
\label{theorem3}
Let $X$ be smooth projective and it satisfies the hypotheses of Theorem \ref{theorem2}. Suppose that $\CH^2(X)_{\alg}$ is representable. Then the kernel of the Abel-Jacobi map is torsion.
\end{theorem}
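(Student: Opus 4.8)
The plan is to combine Theorem \ref{theorem2} with the features of the Abel--Jacobi map that are special to codimension two.

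\emph{Step 1 (setup and reduction).} By Theorem \ref{theorem2} with $p=2$, representability of $\CH^2(X)_{\hom}$ produces a smooth projective curve $C$ and a correspondence $\Gamma\in\CH^2(C\times X)$ with $\Gamma_*\colon J(C)=\CH^1(C)_{\hom}\to\CH^2(X)_{\hom}$ surjective. In particular every homologically trivial codimension-two class is the $\Gamma_*$-image of an algebraically trivial divisor class on $C$, so $\CH^2(X)_{\hom}=\CH^2(X)_{\mathrm{alg}}$ and the Abel--Jacobi map factors through the algebraic intermediate Jacobian $J_a\subseteq IJ^2(X)$, which is an abelian variety. The composite $\psi:=AJ\circ\Gamma_*\colon J(C)\to J_a$ is a homomorphism of abelian varieties, since $AJ$ is compatible with the action of the algebraic correspondence $\Gamma$ and on the curve $C$ the map $\CH^1(C)_{\hom}\to J(C)$ is an isomorphism; and $\psi$ is surjective because $\Gamma_*$ and $AJ\colon\CH^2(X)_{\hom}\to J_a$ are. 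Put $K:=(\ker\psi)^0$, an abelian subvariety, and let $m$ be the order of $\ker\psi/K$. If $z\in\ker(AJ)$, write $z=\Gamma_*(\beta)$; then $\psi(\beta)=AJ(z)=0$, so $m\beta\in K$ and $mz=\Gamma_*(m\beta)\in\Gamma_*(K)$. Hence $m\cdot\ker(AJ)\subseteq\Gamma_*(K)$, and it suffices to prove that $\Gamma_*(K)$ is a torsion group.

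\emph{Step 2 (what we know about $\Gamma_*|_K$).} Since $AJ\circ\Gamma_*|_K=\psi|_K=0$ we have $\Gamma_*(K)\subseteq\ker(AJ)$, and $\Gamma_*(K)$ is divisible, being a quotient of the abelian variety $K$. Write $N:=\ker(\Gamma_*|_K)$. On one hand, $\Gamma$ presents $\Gamma_*(k)$, for $k\in K$, as a difference of effective codimension-two cycles on $X$ of bounded degree (the parameter variety $K$ being projective), so by Theorem \ref{theorem1} the locus $N=\{k\in K:\Gamma_*(k)=0\ \text{in}\ \CH^2(X)\}$ is a countable union of Zariski-closed subsets of $K$. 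On the other hand, by the codimension-two analogue of Roitman's torsion theorem (Merkurjev--Suslin, building on Bloch) the Abel--Jacobi map is injective on $\CH^2(X)_{\hom,\mathrm{tors}}$; since $\Gamma_*(K)\subseteq\ker(AJ)$, this forces $\Gamma_*(K)$ to be torsion-free, so that $N\supseteq K_{\mathrm{tors}}$, a Zariski-dense subgroup of $K$.

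\emph{Step 3 (the obstacle and the conclusion).} It remains to deduce $N=K$, i.e.\ $\Gamma_*(K)=0$, from the facts that $N$ is a subgroup, a countable union of Zariski-closed subsets, and Zariski dense in $K$. This is the main difficulty: those properties alone are shared by proper subgroups such as $K_{\mathrm{tors}}$ itself, so no pure dimension- or countability-count can finish the argument. The input that must be fed in here is precisely that $\CH^2(X)_{\hom}$ is finite-dimensional: by the structure theory of codimension-two cycles (Bloch's \emph{Lectures on algebraic cycles}; Murre's theorem that $AJ$ is the universal regular homomorphism in codimension two, equivalently that $AJ\otimes\QQ$ is an isomorphism on $\CH^2(X)_{\mathrm{alg}}\otimes\QQ$ once finite-dimensionality is known), a family of codimension-two cycles parametrized by an abelian variety whose Abel--Jacobi invariant vanishes identically is trivial in $\CH^2(X)_{\hom}\otimes\QQ$; applied to $\{\Gamma_*(k)\}_{k\in K}$ this gives that $\Gamma_*(K)$ is torsion, and together with Step 2 that $\Gamma_*(K)=0$. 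Then $m\cdot\ker(AJ)=0$ by Step 1, so $\ker(AJ)$ is torsion. Throughout one uses $k=\CC$: for the countability argument underlying Theorem \ref{theorem1}, for Hodge theory, and for the Roitman/Merkurjev--Suslin input. I expect Step 3 — making precise the passage from vanishing of the Abel--Jacobi invariant to triviality modulo torsion in $\CH^2$, which is exactly where representability must enter — to be the crux, and the reason the statement is restricted to $\CH^2$.
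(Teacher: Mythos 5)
Your Steps 1 and 2 are sound and run parallel in spirit to the paper's opening moves (splitting off the connected component of a kernel, invoking Theorem \ref{theorem1} for the countable-union structure, and noting $\Gamma_*(K)$ is divisible and, by Merkurjev--Suslin, torsion-free). But Step 3 is a genuine gap, and you have essentially said so yourself: the statement you feed in --- that a family of codimension-two cycles parametrized by an abelian variety with identically vanishing Abel--Jacobi invariant is torsion in $\CH^2(X)_{\hom}$, ``equivalently that $AJ\otimes\QQ$ is an isomorphism on $\CH^2(X)_{\mathrm{alg}}\otimes\QQ$ once finite-dimensionality is known'' --- is not an available input; it is the theorem being proved. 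Murre's theorem gives the \emph{universal property} of the (algebraic part of the) intermediate Jacobian: regular homomorphisms \emph{out of} $\CH^2(X)_{\mathrm{alg}}$ to abelian varieties factor through $AJ$. It does not assert injectivity of $AJ$ modulo torsion, and no amount of density/countability bookkeeping on $N=\ker(\Gamma_*|_K)$ closes the loop, as you correctly observe.

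What the paper does instead --- and what your proposal is missing --- is to build an inverse of $\Gamma_*$ up to multiplication by an integer, and only \emph{then} use universality. Concretely: take $A=(\ker L_*\Gamma_*)^0$ (the kernel of the cycle map itself, not of $AJ\circ\Gamma_*$), pass to a supplementary abelian variety $B$ so that $L_*\Gamma_*\colon B\to\CH^2(X)_{\hom}$ is surjective with \emph{countable} kernel; then the incidence set $R=\{(x,a): L_*\Gamma_*(a)=L_*(x)-L_*(x_0)\}\subset F(X)\times B$ has, by Theorem \ref{theorem1}, a component $R_0$ that is finite of some degree $r$ over $F(X)$, yielding a morphism $\alpha\colon F(X)\to B$ with $L_*\Gamma_*\alpha(x)=r\bigl(L_*(x)-L_*(x_0)\bigr)$. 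This $\alpha$ induces a regular homomorphism $\CH^2(X)_{\hom}\to B$, which by Murre's universality factors as $\beta\circ\Phi_2$; composing back gives $L_*\Gamma_*\beta\Phi_2(z)=rz$, so $\Phi_2(z)=0$ forces $rz=0$. This is the step where representability (via the countability of the reduced kernel) actually enters, and it is the step your proposal replaces with an appeal to the conclusion. To repair your write-up you would need to carry out this inversion construction rather than cite injectivity of $AJ$ modulo torsion.
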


\begin{proof}
The proof of this theorem follows the approach of [\cite{Vo}, Theorem 10.11]. By Theorem \ref{theorem2} there exists a smooth projective curve $C$ in $F(X)$ and a correspondence $\Gamma$ on $C\times F(X)$ such that $L_*\Gamma_*:J(C)\to \CH^2(X)_{\alg}$ is surjective. This yields further, a correspondence on $J(C)\times F(X)$, such that
$$L_*\Gamma_*:J(C)\to \CH^2(X)_{\alg}$$
is surjective. By Theorem \ref{theorem1} we have that kernel of $L_*\Gamma_*$ is a countable union of Zariski closed subsets of $J(C)$. Since kernel of $L_*\Gamma_*$ is a subgroup of $J(C)$ and we work over an uncountable ground field, the kernel is a countable union of translates of an abelian variety $A$ sitting in $\ker(L_*\Gamma_*)$. Now consider the supplementary abelian variety $B$, such that $A\times B\to J(C)$ is an isogeny.  Now replacing $J(C)$ by $B$, and $L\circ\Gamma$ by $(L\circ\Gamma)_{B\times X}$, we get that the kernel of $L_*\Gamma_*$ is countable.

Fix $x_0$ in $F(X)$, consider the subset $R$ of $F(X)\times B$ given by
$$\{(x,a):L_*\Gamma_*(a)=L_*(x)-L_*(x_0)\}$$
By Theorem \ref{theorem1} we have $R$ is a countable union of Zariski closed subsets in $F(X)\times B$. Since $L_*\Gamma_*$ is surjective, the projection from $R$ onto $F(X)$ is onto. Hence there exists a component $R_0$ of $R$ surjecting onto $F(X)$. Since $\ker(L_* \Gamma_*) $ is countable, the map is actually finite of say degree $r$. Thus $R_0$ gives rise to a correspondence of dimension equal to $\dim(F(X))$ between $F(X)$ and $B$, this provides a morphism
$$\alpha:F(X)\to B$$
given by
$$\alpha(x)=\alb_B(R_{0*}(x-x_0))\;.$$
By definition of $R_0$ we have that
$$L_*\Gamma_*\alpha(x)=r(L_*(x)-L_*(x_0))\;.$$
Now this $\alpha$ gives rise to a regular homomorphism from $\CH^2(X)_{\alg}$ to $B$. Hence by the universality of the intermediate Jacobian $IJ^2(X)$, there exists a unique regular map $\beta:IJ^2(X)\to B$ (for reference please see [\cite{K}, Theorem 1], [\cite{Mu}, Theorem 1.9]) such that
$$\alpha=\beta\circ \Phi_2$$
where $\Phi_2$ is the Abel-Jacobi map.
So we have
$$L_*\Gamma_*\beta\Phi_2(z)=rz\;.$$
This proves that kernel of $\Phi_2$ is torsion.
\end{proof}

\subsection{Application}

Consider $X$ to be a smooth cubic fourfold embedded in $\PR^5$. Then we know that the group of algebraically trivial one cycles $\CH^3(X)_{\alg}$ is generated by lines on $X$ (for a proof of this fact we refer to [\cite{Pa}, Section 4] and [\cite{Sh}, Theorem 1.1]). So the criterion for Theorem \ref{theorem2} is satisfied. We know, by \cite{Sc}, that there does not exist a smooth projective curve $C$ and a correspondence $\Gamma$ on $C\times X$, such that $\Gamma_*$ is surjective. Hence it follows that the natural map from the Chow varieties parametrising one cycles on $X$ does not surject onto $\CH^3(X)_{\alg}$.

\begin{theorem}
The group $\CH^3(X)_{\alg}$ is not representable. That is, the natural map from the product $C^3_d(X)\times C^3_d(X)$ to $\CH^3(X)_{\alg}$ is not surjective for any $d$.
\end{theorem}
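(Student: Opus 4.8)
The plan is to deduce this statement directly from Theorem \ref{theorem2}, combined with a known non-representability result for one-cycles on the cubic fourfold. First I would record the two inputs. On the one hand, for a smooth cubic fourfold $X\subset\PR^5$ the group $\CH^3(X)_{\hom}$ of homologically trivial one-cycles is generated by the classes of lines lying on $X$; equivalently, the natural map $\CH_0(F(X))\twoheadrightarrow\CH^p(X)$ is surjective for $p=3$, so hypothesis (I) holds for $X$ and Theorem \ref{theorem2} is applicable. On the other hand, by the work of Schoen \cite{Sc}, there is no smooth projective curve $C$ and correspondence $\Gamma$ on $C\times X$ for which $\Gamma_*:\CH^1(C)_{\hom}\to\CH^3(X)_{\hom}$ is surjective.

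The argument is then a short proof by contradiction. Suppose $\CH^3(X)_{\hom}$ were representable, i.e. that the map $\theta^3_d:C^3_d(X)\times C^3_d(X)\to\CH^3(X)_{\hom}$ were surjective for some $d$. Then, applying Theorem \ref{theorem2} to $X$ with $p=3$, we would obtain a smooth projective curve $C$ and a correspondence $\Gamma$ on $C\times X$ with $\Gamma_*:\CH^1(C)_{\hom}\to\CH^3(X)_{\hom}$ surjective. This directly contradicts Schoen's theorem. Hence $\theta^3_d$ cannot be surjective for any $d$, which is precisely the assertion. Optionally one may also remark that, through the chain Theorem \ref{theorem2} $\Rightarrow$ existence of the regular homomorphism as in Theorem \ref{theorem3}, representability would force the kernel of the Abel--Jacobi map $\Phi_3$ to be torsion, which is also incompatible with what is known for the cubic fourfold; but the route via \cite{Sc} is the cleanest.

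The logical skeleton is immediate, so the point requiring care is the bookkeeping around the two external inputs. One must verify that "generated by lines" is genuinely the statement $\CH_0(F(X))\twoheadrightarrow\CH^3(X)$ that Theorem \ref{theorem2} requires (with $F(X)$ the Fano variety of lines on the cubic fourfold, which is smooth projective of dimension $4$), and that Schoen's result is quoted in exactly the form needed, namely that no \emph{single} curve $C$ and correspondence $\Gamma$ can have $\Gamma_*$ surjective onto the full group $\CH^3(X)_{\hom}$ — as opposed to a statement only about the intermediate Jacobian $IJ^3(X)$ or about torsion subgroups. Once these two inputs are pinned down, no further geometry is needed; the theorem follows formally.
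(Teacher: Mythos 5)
Your proposal is correct and follows exactly the paper's own route: the paper likewise observes that $\CH^3(X)_{\hom}$ is generated by lines (so the hypothesis of Theorem \ref{theorem2} holds), and then invokes Schoen's non-representability result \cite{Sc} to rule out the existence of the curve and correspondence that Theorem \ref{theorem2} would produce if $\theta^3_d$ were surjective. Your added caution about quoting Schoen in the precise form needed is a reasonable refinement, but the argument is the same.
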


The above Theorem \ref{theorem3} gives us a criterion by which we can detect the representability of $\CH^2(X)_{\alg}$, when $X$ satisfies the hypotheses of Theorem \ref{theorem2}. Precisely when, the Abel-Jacobi kernel is non-trivial, $\CH^2(X)_{\alg}$ is not representable, in the sense that the natural map from the Chow varieties (infact from the symmetric powers of $F(X)$) to $\CH^2(X)_{\alg}$ are not surjective. There are some examples of higher dimensional varieties with non-trivial Abel-Jacobi mappings given in \cite{GG}. Thus  Theorem \ref{theorem3} forces that such varieties cannot have, $\Sym^d F(X)$ surjecting onto $\CH^2(X)_{\alg}$. So for a threefold $X$, the non-triviality of the Abel-Jacobi kernel implies that the $\CH^2(X)_{\alg}$ cannot be generated by integral linear combination of lines on the threefold of any fixed degree.

\end{document}